\newcommand{\inv}{^{-1}}
\newcommand{\id}{\operatorname{id}}
\newcommand{\cof}{\operatorname{cof}}
\newcommand{\R}{\mathbb R}
\newcommand{\bS}{\mathbb S}
\newcommand{\abs}[1]{\lvert#1\rvert}
\newcommand{\norm}[1]{\lVert#1\rVert}
\DeclareMathOperator{\esssup}{ess \, sup}
\newtheorem{theorem}{Theorem}
\newtheorem*{theorem*}{Theorem}{\bf}{\it}
\newtheorem*{proposition*}{Proposition}{\bf}{\it}
\newtheorem*{lemma*}{Lemma}{\bf}{\it}
\newtheorem{corollary}[equation]{Corollary}
\theoremstyle{definition}
\newtheorem*{definition*}{Definition}
\theoremstyle{remark}
\newtheorem{remarks}[equation]{Remarks}
\begin{document}

\title{On BLD-mappings with small distortion}

\author{Aapo Kauranen}
\address{
  Department of Mathematics and Statistics, P.O. Box 35, FI-40014 University of Jyv\"askyl\"a, Finland
}
\email{aapo.p.kauranen@jyu.fi}
\thanks{A.K.
  acknowledges the support of Academy of Finland, grant number 322441.}

\author{Rami Luisto}
\address{Department of Mathematics and Statistics, P.O. Box 35, FI-40014 University of Jyv\"askyl\"a, Finland}
\email{rami.luisto@gmail.com}
\thanks{R.L.\
  was partially supported by the Academy of Finland
  (grant 288501 `\emph{Geometry of subRiemannian groups}')
  and by the European Research Council
  (ERC Starting Grant 713998 GeoMeG `\emph{Geometry of Metric Groups}').
}

\dedicatory{Dedicated to Professor Pekka Koskela on his $59$th birthday}

\author{Ville Tengvall}
\address{Department of Mathematics and Statistics, P.O. Box 68 (Pietari Kalmin katu 5), FI-00014 University of Helsinki, Finland}
\email{ville.tengvall@helsinki.fi}
\thanks{The research of V.T. was supported by the Academy of Finland, project number 308759.}

\subjclass[2010]{57M12, 30C65}
\keywords{BLD-mappings, branch set, quasiregular mappings, local homeomorphism}
\date{\today}

\begin{abstract}
  We show that every $L$-BLD-mapping in a domain of $\R^n$ is a local homeomorphism if $L < \sqrt{2}$ or $K_I(f) < 2$. These bounds are sharp as shown by a winding map.
\end{abstract}

\maketitle


\section{Introduction}

Mappings with $L$-bounded length distortion (abbr. \emph{$L$-BLD mappings}) were originally introduced by Martio and V\"{a}is\"{a}l\"{a} in \cite{MartioVaisala} as continuous, sense-preserving, discrete, and open mappings
$$f \colon \Omega  \to \R^n \quad \text{($\Omega \subset \R^n$ domain with $n \ge 2$)}$$
satisfying the upper and lower length distortion bounds
\begin{align}\label{LengthDistortion}
  L\inv\ell(\alpha)
  \leq \ell(f \circ \alpha)
  \leq L \ell(\alpha)
\end{align}
for every path $\alpha$ in $\Omega$ and for a fixed constant $L \ge 1$, where $\ell(\gamma)$ denotes the length of a path $\gamma$. Notice that no constant map satisfies \eqref{LengthDistortion}.  These mappings form a  superclass of local $L$-bi-Lipschitz mappings which have been studied by several authors, see e.g.\ \cite{DrasinPankka, HajMalZim18, LeDPank14, HeiSul02} and the references therein. If a mapping is an $L$-BLD-mapping for some $L \ge 1$ we may simply call it a \emph{BLD-mapping}. 

Unlike local bi-Lipschitz maps, BLD-mappings do not need to be local homeomorphisms. Indeed, for instance the map
\begin{align}\label{MainExample}
  (r,\theta, z) \stackrel{w}{\mapsto} \bigl( \sqrt{2}^{-1}r,2\theta, \sqrt{2}^{-1}z  \bigr) \quad \quad \text{($z \in \R^{n-2}$)}
\end{align}
in cylindrical coordinates of $\R^{n}$ defines a noninjective $\sqrt{2}$-BLD mapping. Our first theorem shows that this mapping is extremal for the failure of local homeomorphism property:

\begin{theorem}\label{thm:Main2}
  Every $L$-BLD-mapping  
  $$f \colon \Omega \to \R^n \quad (\text{$\Omega \subset \R^n$ domain with $n \ge 2$})$$
  with $L < \sqrt{2}$ is a local homeomorphism.
\end{theorem}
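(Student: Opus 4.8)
The plan is to show that the branch set $B_f$ is empty, since a sense-preserving, discrete and open map is a local homeomorphism precisely away from its branch set. Recalling that for such maps $B_f$ and $f(B_f)$ have topological dimension at most $n-2$ and that the local index satisfies $i(x,f)\ge 2$ exactly at the points of $B_f$, I would argue by contradiction: assume $x_0\in B_f$, write $y_0=f(x_0)$ and $m:=i(x_0,f)\ge 2$, and fix a normal neighbourhood $U$ of $x_0$. The aim is to extract from the branching a single loop near $x_0$ whose image is stretched by a factor exceeding $L$, contradicting the upper bound in \eqref{LengthDistortion} once $L<\sqrt2$.

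The heart of the argument is an estimate on small spheres. First I would prove the auxiliary fact that, for all sufficiently small $r>0$, the image of $S(x_0,r)$ stays at definite distance from $y_0$, namely $\operatorname{dist}\bigl(y_0,f(S(x_0,r))\bigr)\ge r/L$. This follows from path lifting together with the length bounds: if some $z$ with $\abs{z-x_0}=r$ had $\abs{f(z)-y_0}<r/L$, then lifting the segment $[f(z),y_0]$ from $z$ (the maximal lift extends to the endpoint, as it is trapped in the compact set $\overline B(x_0,2r)\subset U$) would produce a path of length $<r$ joining $z$ to a preimage of $y_0$; shrinking $r$ so that $x_0$ is the only preimage of $y_0$ in $\overline B(x_0,2r)$, this path would join $z$ to $x_0$ and be shorter than $\abs{z-x_0}=r$, which is impossible. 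Next, the branching of index $m$ forces the image of a small circle $\gamma_r\subset S(x_0,r)$ linking $B_f$ to wind at least $m\ge 2$ times around $y_0$. Combining the winding with the distance bound, the angular part of the length gives $\ell(f\circ\gamma_r)\ge 2\pi m\,(r/L)$, while the upper bound in \eqref{LengthDistortion} gives $\ell(f\circ\gamma_r)\le 2\pi L r$. Hence $L^2\ge m\ge 2$, i.e.\ $L\ge\sqrt2$, contradicting the hypothesis; this also explains the sharpness, the winding map \eqref{MainExample} realising equality $L^2=m=2$.

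In dimension $n=2$ this is immediate: one takes $\gamma_r=S(x_0,r)$, and $f\circ\gamma_r$ has winding number $i(x_0,f)=m$ about $y_0$, so the angular length estimate applies directly. The main obstacle is to make the winding rigorous when $n\ge 3$, where winding around the point $y_0$ must be replaced by linking with the codimension-two image branch set $f(B_f)$, and where $B_f$ and $f(B_f)$ may be geometrically wild. I would handle this by slicing: choosing a $2$-plane $W\ni x_0$ transverse to $B_f$ (possible for generic $W$, since $\dim B_f\le n-2$) so that $\gamma_r=S(x_0,r)\cap W$ still sees the full index $m$, and then projecting the image loop onto a suitable $2$-plane to read off a genuine planar winding number. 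The delicate points---guaranteeing that the slice detects the index $m$, and that the projection does not destroy the distance bound $r/L$ obtained in the previous step---are precisely where the smallness of the branch set and the two-sided length control must be used carefully, and this is the step I expect to be the crux of the proof.
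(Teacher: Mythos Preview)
Your planar argument is correct, but the step you yourself flag as ``the crux'' for $n\ge3$ is a genuine gap, and the plan you sketch does not close it. The obstruction is that a loop has no winding number around the single point $y_0$ once $n\ge3$ (since $\R^n\setminus\{y_0\}$ is simply connected), so any winding must be measured against the codimension-two set $f(B_f)$; but your radiality estimate $\operatorname{dist}(f\circ\gamma_r,y_0)\ge r/L$ gives no lower bound on $\operatorname{dist}\bigl(f\circ\gamma_r,f(B_f)\bigr)$, which is what an angular-length inequality around $f(B_f)$ would need. Projecting to a $2$-plane through $y_0$ destroys the distance bound rather than rescuing it: a point at distance $r/L$ from $y_0$ lying nearly orthogonal to the chosen plane lands arbitrarily close to $y_0$. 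And since $B_f$ and $f(B_f)$ can be genuinely wild (there is no local normal form for branched covers in dimensions $n\ge4$), the phrases ``$2$-plane transverse to $B_f$'' and ``sees the full index $m$'' cannot in general be made precise.

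The paper's proof is the natural higher-dimensional substitute for your $n=2$ idea: it replaces your circle by the full $(n-1)$-sphere. After a blow-up producing an $L$-BLD map $g\colon\R^n\to\R^n$ with $L^{-1}\|x\|\le\|g(x)\|\le L\|x\|$ everywhere, one radially projects $g|_{\partial B(\mathbf 0,r)}$ onto $\partial B(\mathbf 0,r/L)$ and rescales, obtaining an $L^2$-Lipschitz self-map $h$ of $\bS^{n-1}$. A dilatation theorem of Gromov---playing exactly the role your angular-length inequality plays on $\bS^1$---forces $h_*$ on $H_{n-1}$ to be $\pm\id$ or zero when $L^2<2$. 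The suspension of $h$ is homotopic to the one-point-compactified map $\hat g\colon\bS^n\to\bS^n$, and since a branched cover has nonzero degree one concludes $\deg\hat g=\pm1$, hence $\hat g$ is injective, contradicting $x_0\in B_f$. Your radiality bound is precisely what makes the radial projection $1$-Lipschitz, so that ingredient survives; what changes is that the one-dimensional length estimate on a loop is replaced by a degree estimate on the whole sphere.
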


Mappings of $L$-bounded length distortion form also a subclass of $L^{2(n-1)}$-quasiregular mappings, see \cite[Lemma~2.3]{MartioVaisala}. We recall that a mapping
$$f \colon \Omega  \to \R^n \quad \text{($\Omega \subset \R^n$ domain with $n \ge 2$)}$$
is called \emph{$K$-quasiregular} if it belongs to Sobolev space $W_{loc}^{1,n}(\Omega, \R^n)$ and satisfies the following \emph{distortion inequality}
\begin{align*}
  \abs{Df(x)}^n \le K J_f(x) \quad \text{a.e.}
\end{align*}
for a given constant $K \ge 1$. Here and what follows $\abs{A}$ stands for the \textit{operator norm} of a given $n \times n$ matrix $A$ and 
$$J_f(x) = \det Df(x)$$ 
denotes the \textit{Jacobian determinant} of the differential matrix $Df(x)$. To every quasiregular mapping we associate the well-known \emph{inner} and \emph{outer distortion functions} defined as follows 
\begin{align*}
  K_I(x,f)
  =  \left\{ \begin{array}{ll}
               \frac{\abs{D^\#f(x)}^{n}}{J_f(x)^{n-1}}, & \textrm{if $J_f(x)>0$}\\
               1, & \textrm{otherwise}
             \end{array} \right.
\end{align*}
and
\begin{align*}
  K_O(x,f)
  =  \left\{ \begin{array}{ll}
               \frac{\abs{Df(x)}^n}{J_f(x)}, & \textrm{if $J_f(x)>0$}\\
               1, & \textrm{otherwise,}
             \end{array} \right.
\end{align*}
measuring how the infinitesimal geometry of $n$-dimensional balls is distorted by the mapping. Here and what follows 
\begin{align*}
  D^{\sharp}f(x) = (\cof Df(x))^T,
\end{align*}
where $\cof$ stands for cofactor matrix. The corresponding \emph{inner} and \emph{outer dilatations} are defined by 
\begin{align*}
  K_I(f) \colonequals \underset{x \in \Omega}{\esssup} \, K_I(x,f),
  \quad
  \text{and}
  \quad
  K_O(f) \colonequals \underset{x \in \Omega}{\esssup} \, K_O(x,f) \,.
\end{align*}
For the theory and basic properties of quasiregular mappings we refer to the standard monographies \cite{IwaniecMartin, Rickman-book, Reshetnyak67, Vuorinen}.

By the generalized Liouville theorem of Gehring \cite{Gehring1962} and Reshetnyak \cite{Reshetnyak1967} non-constant 1-quasiregular mappings are restrictions of M\"{o}bius transformations. On the other hand, for the winding map in \eqref{MainExample} we have
$$K_I(w) = 2 \quad \text{and} \quad K_O(w) = 2^{n-1}.$$
In \cite[Remark~4.7]{MRV-71} it was conjectured that any quasiregular mapping with $K_I(f)<2$ is a local homeomorphism. That is, the behaviour of the winding map is extremal in this sense.

The conjecture is not yet solved, but some partial results are known.
By the result of Martio, Rickman, and V\"{a}is\"{a}l\"{a} \cite[Theorem~4.6]{MRV-71} there exists $\varepsilon(n) > 0$ such that every quasiregular mapping
$$f \colon \Omega \to \R^n \quad \text{($\Omega \subset \R^n$ with $n \ge 3$)} \quad \text{with $K_I(f) < 1+\varepsilon(n)$}$$
is a local homeomorphism, see also \cite{Goldstein1971}. The best known quantitative bound for the number $\varepsilon(n)>0$ is given by Rajala in \cite{Rajala-MartioResult}. Furthermore, the conjecture is known to be true when the \emph{branch set} 
$$B_f = \{ x \in \Omega : \text{$f$ is not a local homeomorphism at $x$} \}$$
of the mapping is geometrically nice. In particular, this is the case when the branch set contains a rectifiable curve, see e.g. \cite[p. 76]{Rickman-book}. 
We give a short proof showing that the conjecture holds also for $L$-BLD mappings. See also \cite{HeKi} for other related injectivity results for BLD mappings.
\begin{theorem}\label{thm:Main1}
  Let 
  $$f \colon \Omega \to \R^n \quad (\text{$\Omega \subset \R^n$ domain with $n \ge 2$})$$
  be an $L$-BLD-mapping. 
  Then 
  \begin{align}\label{MartioIneq}
    K_I(f) \geq i(x,f) \quad \text{for every $x \in \Omega$.}
  \end{align} Especially, if $K_I(f)<2$ then $f$ is a local homeomorphism.
\end{theorem}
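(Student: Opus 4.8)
The plan is to establish the pointwise inequality \eqref{MartioIneq}; the local-homeomorphism conclusion then follows at once, because $i(x,f)$ is a positive integer and a discrete open map is a local homeomorphism at $x$ exactly when $i(x,f)=1$, so $K_I(f)<2$ forces $i(x,f)=1$ at every $x\in\Omega$. The argument below bounds $i(x,f)$ for an arbitrary fixed $x$ (for $x\notin B_f$ it is trivial since $K_I(f)\ge1$). First I would record two metric consequences of the BLD condition. Write $U(x,f,\rho)$ for the $x$-component of $f^{-1}(B(f(x),\rho))$, which for all small $\rho$ is a normal neighborhood on which $f$ has constant multiplicity $i:=i(x,f)$. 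The upper bound in \eqref{LengthDistortion} makes $f$ locally $L$-Lipschitz, whence $B(x,\rho/L)\subset U(x,f,\rho)$; the lower bound in \eqref{LengthDistortion}, applied to the maximal lift (starting at $z$) of the radial segment from $f(z)$ to $f(x)$, which terminates at the unique preimage $x$ of $f(x)$ in the normal neighborhood, gives $|z-x|\le L\,|f(z)-f(x)|<L\rho$ and hence $U(x,f,\rho)\subset B(x,L\rho)$.

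The heart of the proof is a comparison of the conformal capacity of a spherical condenser in the target with that of its preimage. Fix $0<\rho_1<\rho_2$ small, set $U_j=U(x,f,\rho_j)$, so that $f$ maps the ring $U_2\setminus\overline{U_1}$ properly and with degree $i$ onto $A=B(f(x),\rho_2)\setminus\overline{B(f(x),\rho_1)}$. Let $\Gamma$ be the family of curves joining $\partial U_1$ to $\partial U_2$ in $U_2\setminus\overline{U_1}$, so that $M(\Gamma)=\operatorname{cap}(U_2,\overline{U_1})$, and let $\Gamma_A$ be the corresponding family in $A$. Since in a normal neighborhood every curve of $\Gamma_A$ has exactly $i$ essentially disjoint lifts lying in $\Gamma$, Väisälä's inequality (see \cite{Rickman-book}) yields
\begin{equation*}
  M(\Gamma)\le\frac{K_I(f)}{i}\,M(\Gamma_A),\qquad M(\Gamma_A)=\omega_{n-1}\bigl(\log(\rho_2/\rho_1)\bigr)^{1-n},
\end{equation*}
with $\omega_{n-1}$ the usual dimensional constant. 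On the other hand, capacity does not increase when the bounded complementary component of the condenser is shrunk or the ring is enlarged outward; using $U_2\subset B(x,L\rho_2)$ and $B(x,\rho_1/L)\subset U_1$ and comparing with a genuine spherical condenser gives the lower bound
\begin{equation*}
  M(\Gamma)=\operatorname{cap}(U_2,\overline{U_1})\ge\operatorname{cap}\bigl(B(x,L\rho_2),\overline{B(x,\rho_1/L)}\bigr)=\omega_{n-1}\bigl(\log(L^2\rho_2/\rho_1)\bigr)^{1-n}.
\end{equation*}

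Combining the two displays and cancelling $\omega_{n-1}$ gives
\begin{equation*}
  \bigl(\log(L^2\rho_2/\rho_1)\bigr)^{1-n}\le\frac{K_I(f)}{i}\bigl(\log(\rho_2/\rho_1)\bigr)^{1-n}.
\end{equation*}
Keeping $\rho_2$ fixed and letting $\rho_1\to0$, the quotient $\log(L^2\rho_2/\rho_1)/\log(\rho_2/\rho_1)\to1$, so the factor $\bigl(\log(L^2\rho_2/\rho_1)/\log(\rho_2/\rho_1)\bigr)^{1-n}$ obtained by dividing through tends to $1$, and the inequality passes to the limit to give $i(x,f)\le K_I(f)$, which is \eqref{MartioIneq}. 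I expect the main technical point to be the justification of the multiplicity $i$ in Väisälä's inequality, namely that in a normal neighborhood every curve of $\Gamma_A$ lifts to exactly $i(x,f)$ members of $\Gamma$ satisfying the hypotheses of that inequality, together with the careful verification via maximal path lifting of the outer inclusion $U(x,f,\rho)\subset B(x,L\rho)$; once these are in place the capacity comparison and the limit are routine, and it is precisely the two-sided BLD metric control that renders the estimate scale-stable, and therefore sharp.
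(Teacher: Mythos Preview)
Your argument is correct. It differs from the paper's proof only in presentation, not in substance. The paper proceeds in three lines: it quotes the $L$-radiality estimate $L^{-1}\|x_0-y\|\le\|f(x_0)-f(y)\|$ from \cite[Corollary~2.13]{MartioVaisala} and the H\"older modulus of continuity $\|f(x_0)-f(y)\|\le B\|x_0-y\|^{\mu}$ with $\mu=(i(x_0,f)/K_I(f))^{1/(n-1)}$ from \cite[Theorem~III.4.7]{Rickman-book}, then lets $y\to x_0$ to force $\mu\le1$. Your capacity comparison is precisely the modulus argument that underlies Rickman's H\"older estimate, carried out directly: the two inclusions $B(x,\rho/L)\subset U(x,f,\rho)\subset B(x,L\rho)$ are exactly the $L$-radiality, and the V\"ais\"al\"a inequality together with the $\rho_1\to0$ limit is the mechanism behind the exponent $\mu$. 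What you gain is a self-contained proof that makes visible where each side of the BLD length bound enters and why the inequality is scale-stable; what the paper gains is brevity by citing the packaged H\"older theorem. The two approaches are, at bottom, the same.
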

Notice that the conclusion of Theorem \ref{thm:Main2} does not hold for quasiregular mappings in general. In \cite{MRV-71} there is an example of a quasiregular mapping $f\colon\Omega\to\R^n$ for which $\sup\{i(x,f)\colon x\in\Omega\}=\infty.$ Also, notice that Theorem \ref{thm:Main2} is valid in dimension two but the same conclusion does not hold for planar quasiregular mappings as one can see by considering the holomorphic function 
$$f \colon \mathbb{C} \to \mathbb{C}, \quad f(z) = z^2.$$ This function is obviously not a BLD-mapping. We also point out that the proof of Theorem~\ref{thm:Main1} is based on the modulus of continuity of quasiregular mappings and on the $L$-radiality of BLD-mappings. The BLD-property is used here only for the $L$-radiality. 


Finally, we recall that by Zorich's theorem \cite{Zorich} every locally injective, entire quasiregular mapping in dimension $n \ge 3$ is quasiconformal. Zorich's theorem fails in the plane, which is shown by the mapping $z \mapsto \exp(z)$. Note that in the BLD-setting every entire, locally homeomorphic BLD-mapping in $\R^n$, $n \ge 2$, is a bi-Lipschitz homeomorphism onto $\R^n$. See eg. \cite[Lemma~4.3]{MartioVaisala}.  If we combine this with our main results we obtain the following:

\begin{corollary}\label{Cor:GlobalHomeo}
  Let $f \colon \R^n \to \R^n$, $n \ge 2$, be an entire BLD-mapping with
  $$L < \sqrt{2} \quad \text{or} \quad K_I(f) < 2.$$
  Then $f$ is an $L$-bi-Lipschitz map onto $\R^n$.
\end{corollary}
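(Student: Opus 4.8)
The plan is to chain the two main theorems together with the cited structural result for entire BLD-mappings, splitting the argument into an injectivity step and a metric-estimate step.

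First I would reduce both hypotheses to the single conclusion that $f$ is a local homeomorphism. If $L < \sqrt{2}$ this is exactly Theorem~\ref{thm:Main2}; if instead $K_I(f) < 2$ it is the final assertion of Theorem~\ref{thm:Main1}. Thus in either case $f \colon \R^n \to \R^n$ is an entire, locally homeomorphic BLD-mapping, and the two branches of the hypothesis merge into one situation. Next I would upgrade \emph{local homeomorphism} to \emph{homeomorphism onto $\R^n$}. This is precisely the content of \cite[Lemma~4.3]{MartioVaisala}: a BLD-mapping is proper, and a proper local homeomorphism onto the simply connected target $\R^n$ is a single-sheeted covering, hence a homeomorphism onto $\R^n$. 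From this point $f\inv$ is a well-defined homeomorphism on all of $\R^n$.

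Finally I would extract the sharp constant $L$ directly from the length distortion bounds \eqref{LengthDistortion}. For the upper bound, given $x,y \in \R^n$ I would apply the right-hand inequality of \eqref{LengthDistortion} to the straight segment $\alpha = [x,y]$, obtaining $\abs{f(x)-f(y)} \leq \ell(f\circ\alpha) \leq L\,\ell(\alpha) = L\,\abs{x-y}$. For the lower bound I would apply the left-hand inequality to the preimage path $\beta = f\inv([f(x),f(y)])$, which runs from $x$ to $y$ and satisfies $f\circ\beta = [f(x),f(y)]$; this gives $L\inv\abs{x-y} \leq L\inv\ell(\beta) \leq \ell(f\circ\beta) = \abs{f(x)-f(y)}$. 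Together the two estimates show that $f$ is $L$-bi-Lipschitz onto $\R^n$.

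The only genuinely nontrivial input is the passage from local to global injectivity, which rests on the properness of BLD-mappings and the simple connectivity of $\R^n$; this is the step I would isolate and attribute to \cite[Lemma~4.3]{MartioVaisala}, while treating the reduction to a local homeomorphism as an immediate consequence of the two main theorems and the bi-Lipschitz estimates as a direct application of \eqref{LengthDistortion} to segments in the domain and in the range.
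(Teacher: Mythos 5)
Your proof is correct and follows essentially the same route as the paper, which obtains the corollary by combining Theorems~\ref{thm:Main2} and~\ref{thm:Main1} (local injectivity) with \cite[Lemma~4.3]{MartioVaisala} (entire locally homeomorphic BLD-mappings are bi-Lipschitz homeomorphisms onto $\R^n$). Your only deviation is cosmetic: you invoke that lemma just for the local-to-global injectivity step and then re-derive the $L$-bi-Lipschitz bounds by applying \eqref{LengthDistortion} to segments and their preimages, which makes the constant $L$ explicit but is already part of the cited result.
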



\section{Proof of Theorem~\ref{thm:Main2}}

In the proof of Theorem~\ref{thm:Main2} we use the $L$-radiality property of mappings with $L$-bounded length distortion, see \cite{Luisto-Characterization}. For a space $X$ we use the following notation:
\begin{align*}
  H_j(X) &= \text{``$j$th homology group of $X$"}\\
  H^j(X) &= \text{``$j$th Alexander-Spanier co-homology group of $X$''.}
\end{align*}
For definition and basic properties of these homology and co-homology groups, as well as the suspension mappings
used in the following proof, we refer to \cite{Hatcher}.
Necessary information on the topological degree theory used in the proof can be found from \cite[Chapter~I]{Rickman-book}.

\begin{proof}[Proof of Theorem~\ref{thm:Main2}]
  Let $f \colon \Omega \to \R^n$ be an $L$-BLD-mapping with $L < \sqrt{2}$. For the contradiction, suppose that the branch set 
  \begin{align*}
    B_f = \{ x \in \Omega : \text{$f$ is not a local homeomorphism at $x$} \}
  \end{align*}
  of $f$ is nonempty and fix a point $x_0 \in B_f$. Without loss of generality
  we may assume that $x_0 = \mathbf{0} = f(x_0)$.
  
  We take first the blow-up of $f$ by defining a sequence of mappings 
  $$g_j \colon j\Omega \to \R^n, \quad g_j(x) = j f(x/j) \, ,$$
  where 
  $$j\Omega \colonequals  \{ jx \in \R^n \mid x \in \Omega\} \quad \quad \text{($j=1,2, \ldots$)} \, .$$ We note that by \ \cite[Theorem 4.7]{MartioVaisala} (see also \cite[Section 4]{Luisto-Characterization}) the
  mappings $g_j$ are $L$-BLD-mappings and contain a subsequence converging uniformly
  to an $L$-BLD-mapping $g \colon \R^n \to \R^n$ such that $$\mathbf{0} \in B_g, \quad g(\mathbf{0}) = \mathbf{0}, \quad \text{and} \quad g \inv (\{ g(\mathbf{0}) \}) = \mathbf{0}.$$
  Furthermore, since the $L$-BLD-mapping $f$ is $L$-radial at $\mathbf{0},$ that is, there exists $r_0>0$ such that for every $x\in B(0,r_0)$ we have
  $$
  L \inv \| x \|
    \leq \| f(x) \|
    \leq L \| x \|.
  $$
  Clearly, for $g_j$ we have
  $$
  L \inv \| x \|
    \leq \| g_j(x) \|
    \leq L \| x \|
  $$
  for all $x\in B(0,jr_0).$
    
  This implies that the blow-up mapping $g$ satisfies
  \begin{align*}
    L \inv \| x \|
    \leq \| g(x) \|
    \leq L \| x \|
  \end{align*}
  for all $x \in \R^n$. In particular, we note that the composition of $g|_{\partial B(\mathbf{0},r)}$
  and the radial projection map
  \begin{align*}
    p \colon \R^n \setminus B(\mathbf{0},r/L) \to \partial B(\mathbf{0},r/L),
    \quad
    p(x) = (r/L) \frac{x}{\|x\|}
  \end{align*}
  is $L$-Lipschitz. Thus, the mapping
  \begin{align*}
    h \colon \bS^{n-1} \to \bS^{n-1},
    \quad
    h(x) = (L/r) (p \circ g|_{\partial B(\mathbf{0},r)})(rx)
  \end{align*}
  is $L^2$-Lipschitz.
  Now by a classical dilatation result \cite[Proposition 2.9, p.\ 30]{Gromov} under the assumption $L < \sqrt{2}$ we see that the induced homomorphism 
  $$h_\ast \colon H_{n-1}(\bS^{n-1}) \to H_{n-1}(\bS^{n-1})$$
  equals either $\pm \id$ or the constant map. 
  
  Since for all $r>0$ the image of the restriction $g|_{\partial B(\mathbf{0},r)}$
  avoids the origin, we note that the restrictions 
  are in fact mutually homotopic in $\R^n \setminus \{ 0 \}$ and homotopic to $h$. In particular
  since $\bS^{n-1}$ is a homotopy retract of $\R^n \setminus \{ 0 \}$,
  we see that $g|_{\R^n \setminus \{0\}}$ is homotopic to $h \times \id_{\R}$ with the identification
  $\R^n \setminus \{ 0 \} \simeq \bS^{n-1} \times \R$.
  
  Finally we note that as a BLD-mapping from $\R^n$ to $ \R^n$, the mapping $g$ extends into
  a branched cover $\hat g \colon \bS^n \to \bS^n$, see e.g.\ \cite{Luisto-NoteOnLocalToGlobal}.
  Furthermore, by the above arguments this mapping $\hat g$ is homotopic to the suspension of
  the map $h$. Therefore, the induced homomorphism 
  $$\hat g_\ast \colon H_n(\bS^n) \to H_n(\bS^n)$$ 
  equals either $\pm \id$ or the constant map.
  But now by the universal coefficient theorem \cite[p.190]{Hatcher} the induced homomorphism
  $$\hat g^\ast \colon H^n(\bS^n) \to H^n(\bS^n)$$ 
  also equals either $\pm \id$ or a constant map.
  As the degree of a branched cover $\bS^n \to \bS^n$ is always non-zero, this implies
  that $\hat g$ has degree $\pm 1$. Thus, the mapping $\hat g : \bS^n \to \bS^n$ is injective. This is a contradiction 
  with the assumption $\mathbf{0} \in B_f$ and so the original claim holds.
\end{proof}

\section{Proof of Theorem~\ref{thm:Main1}}

In what follows, for a given continuous mapping 
$$f \colon \Omega \to \R^n \quad \text{($\Omega \subset \R^n$ domain with $n \ge 2$)}$$
we denote its \emph{local (topological) index} at a point $x \in \Omega$ by $i(x,f) \in \mathbb{Z}$. We recall that as a sense-preserving, continuous, discrete and open mapping every BLD-mapping satisfies
$$i(x,f) \ge 1 \quad \text{for every $x \in \Omega$.}$$
Moreover, for a given point $x \in \Omega$ we have $i(x,f) = 1$ if and only if $f$ is a local homeomorphism. Thus, in order to proof Theorem~\ref{thm:Main1} it suffices to show that under the assumptions of the theorem we have $K_I(f) \ge i(x,f)$ for every $x \in \Omega$. For the properties of local index used here and what follows, see \cite[Chapter~1]{Rickman-book}.

\begin{proof}[Proof of Theorem~\ref{thm:Main1}]
  Fix a point $x_0 \in \Omega$ and denote
  $$\mu \colonequals  \bigl(i(x_0,f)/K_I(f) \bigr)^{\frac{1}{n-1}}.$$
  By combining \cite[Corollary 2.13]{MartioVaisala} and \cite[Theorem III.4.7, p.72]{Rickman-book} we see that there exists a radius $r>0$ and a constant $B \ge 1$ such that
  \begin{align}\label{MartioCondradiction}
    L^{-1} \norm{x_0-y} \le \norm{f(x_0)-f(y)} \le B \norm{x_0-y}^{\mu}
  \end{align}
  for every $y \in B(x_0,r)$. By letting $y \to x_0$ we get from \eqref{MartioCondradiction} that $\mu \le 1$. Especially, from here it follows that
  \begin{align}\label{WeUseThisNext}
    K_I(f) \ge i(x,f) \quad \text{for every $x \in \Omega$,}
  \end{align}
  and the claim follows. 
\end{proof}

\begin{remarks} We end this note with the following remarks:
  \begin{itemize}
  \item[(a)] In the planar case Theorem~\ref{thm:Main1} implies  Theorem~\ref{thm:Main2}. Indeed, if 
    $$f \colon \Omega  \to \R^2 \quad \text{($\Omega \subset \R^2$ domain)}$$ 
    is an $L$-BLD-mapping with $L < \sqrt{2}$ then by \cite[Lemma~2.3]{MartioVaisala} it is $K$-quasiregular with 
    $$K \le L^{2(n-1)}  < 2 \, .$$
    Especially, in the planar case we have
    $$K_I(f) = K_O(f) < 2.$$
    Thus, by Theorem~\ref{thm:Main1} the mapping $f$ is a local homeomorphism.
    
  \item[(b)] We do not know what is the optimal outer dilatation bound for the local injectivity of BLD-mappings. However, the map
    $$(r, \theta, z) \mapsto (r,2\theta, 2z) \quad \quad (z \in \R^{n-2})$$
    in cylindrical coordinates would suggest that every BLD-mapping with $K_O(f) < 2$ is a local homeomorphism.
    
  \item[(c)] In \cite{KLT} we obtained that for every $n \ge 2$ there exists a quasiregular mapping of the unit ball with a compact and nonempty branch set. Moreover, this mapping can be actually assumed to be a BLD-map. It would be interesting to find the optimal length distortion and outer and inner dilatation bounds for the existence of nonempty compact branch sets for BLD-mappings of the unit ball.
  \end{itemize} 

\end{remarks}




\def\cprime{$'$}

\end{document}